\DeclareMathOperator{\supp}{supp}
\author{Nithi Rungtanapirom}
\address{Department of Mathematics and Computer Science, Faculty of Science\\ Chulalongkorn University\\ 10330 Bangkok\\ Thailand}
\email{nithi.r@chula.ac.th}
\author{J\"orn Steuding}
\address{Department of Mathematics\\ W\"urzburg University\\ Emil Fischer-Str. 40, 97074 W\"urzburg\\ Germany} 
\email{joern.steuding@uni-wuerzburg.de}
\author{Saeree Wananiyakul}
\address{Department of Mathematics and Computer Science, Faculty of Science\\ Chulalongkorn University\\ 10330 Bangkok\\ Thailand}
\email{s.wananiyakul@hotmail.com}
\title{Effective weak universality in short intervals}
\keywords{Riemann zeta-function, uniform approximation, universality}
\subjclass{11M06, 11M26, 30E10}
\theoremstyle{plain}
\newtheorem{theorem}{Theorem}
\newtheorem{prop}[theorem]{Proposition}
\theoremstyle{definition}
\theoremstyle{remark}
\begin{document}
\begin{abstract}
We prove an effective multidimensional $\Omega$-result of Voronin in short intervals $[T,T+H]$ with $T^{\frac{27}{82}}\le H\le T$ and derive an effective weak universality theorem of the Riemann zeta-function.  Furthermore, we provide a few remarks for these results under the assumption of the Riemann hypothesis.
\end{abstract}

\maketitle

\section{Statement of the Main Results}

In 1975, Sergei Voronin \cite{vor} proved his celebrated universality theorem for the Riemann zeta-function $\zeta$ which states that, roughly speaking, every non-vanishing analytic function $g$, defined on a disk $D\coloneqq\{s:\vert s\vert\leq r\}$ of sufficiently small radius~$r$, can be uniformly approximated by shifts $\zeta(s+i\tau)$ of the Riemann zeta-function. Moreover, given $\varepsilon>0$, the set of real $\tau>0$ satisfying 
\[\max_{s\in D}\bigg\vert \zeta\Big(s+\frac 34+i\tau\Big)-g(s)\bigg\vert<\varepsilon\]
has a positive lower density. Two years earlier, Voronin \cite{voroninj} had proved a multi-dimensional denseness theorem which continued earlier work by Harald Bohr and may be considered as a first step towards the later universality theorem (cf. \cite{glmss}). In 1988, Voronin \cite{voronin1989} obtained an {\it effective} version of the latter result which implies a weak effective version of universality as shown by Ram\= unas Garunk\v stis et al. \cite{glmss} (and which was probably known to Voronin as well). Here {\it effective} means that there exists an explicit upper bound for the shift $\tau$. 

In this note, we extend these results to {\it short} intervals $[T,T+H]$ for the shifts~$\tau$, a~concept that was recently introduced by Antanas Laurin\v cikas \cite{laurincikas}. Note that {\it short} here means that $H=o(T)$. For treating short intervals, let $N(\alpha;T,H)$ be the number of zeros $\beta+i\gamma$ of $\zeta(s)$ in the rectangle $\alpha<\beta$ and $T\le t\le T+H$ (counting multiplicities). We shall use an advanced density theorem due to Ramachandran Balasubramanian \cite{balasubramanian1978}, which is stated that
\begin{equation}\label{eq: bal}
N(\alpha;T,H)\ll H^{\frac{4(1-\alpha)}{3-2\alpha}}(\log H)^{100},
\end{equation}
uniformly in $\alpha$ and $H$, for all $H$ satisfying $T^{\frac{27}{82}}\le H\le T$. 

Our main results are the following:

\begin{theorem}\label{Thm 1} 
Let $N\in \mathbb{N}$, $\sigma_0\in (\frac 12,1),\boldsymbol{a}=(a_0,a_1,\dots,a_{N-1})\in \mathbb C^N$ and $\varepsilon\in (0,1)$ be arbitrary but fixed. Then, the system of inequalities
\[
\bigg|\frac{d^k}{d s^k}\log \zeta(s)\Big|_{s=\sigma_0+ i\tau}-a_k\bigg|<\varepsilon \qquad\mbox{for}\quad k=0,1,\ldots,N-1
\]
has a solution $\tau\in [T,T+H]$ provided that $T^{\frac{27}{82}}\le H\le T$ and
\[
T\ge \exp_2\bigg(C_{1}(N,\sigma_0)\Big(\|\boldsymbol{a}\|+\frac 1{\varepsilon}\Big)^{\frac 8{1-\sigma_0}+{\frac{8}{\sigma_0-{\frac 1 2}}}}\bigg),
\]
where $C_1(N,\sigma_0)$ is a positive, effectively computable constant depending only on $N,\sigma_0$ {(and not $H$!)}, and $\|\boldsymbol{a}\|\coloneqq\sum_{0 \le k\le N-1}|a_k|$.
\end{theorem}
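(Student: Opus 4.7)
The plan is to adapt Voronin's effective approach to the short-interval setting, using Balasubramanian's density bound (\ref{eq: bal}) in place of the usual full-interval mean-value estimates. The argument splits into three natural stages. First, approximate each derivative $\frac{d^k}{ds^k}\log\zeta(\sigma_0+i\tau)$ by a finite Dirichlet polynomial over prime powers $p^m\le M$. Second, invoke a Pecherskii--Steinitz type multi-dimensional rearrangement theorem to show that, for a suitable choice of complex unimodular multipliers $(e^{i\theta_p})_{p\le M}$, the weighted sum over $p\le M$ is $\varepsilon/3$-close to $\boldsymbol{a}$. Third, solve the simultaneous Diophantine approximation problem $\tau\log p\equiv-\theta_p\pmod{2\pi}$ for every $p\le M$ with $\tau$ inside $[T,T+H]$.

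For the approximation stage I would start from the identity
\[
\frac{d^k}{ds^k}\log\zeta(s)=\sum_p\sum_{m\ge 1}\frac{(-m\log p)^k}{m\,p^{ms}},\qquad\Re s>1,
\]
and split it into a main term over $p\le M$ together with tails over $p>M$ and $m\ge 2$. The contour-shift argument that moves this identity into the strip $\sigma_0\in(\tfrac12,1)$ requires a bound on the number of zeros of $\zeta$ with real part $>\sigma_0$ and ordinate in $[T,T+H]$; this is supplied by (\ref{eq: bal}), which is uniform in $H$ precisely in the range $T^{27/82}\le H\le T$. The outcome is that, outside an exceptional subset of $[T,T+H]$ of measure $o(H)$, the Dirichlet-polynomial truncation with $M=\exp(C(\|\boldsymbol{a}\|+1/\varepsilon)^{\alpha})$ reproduces every derivative up to an error $\le\varepsilon/3$, for an explicit exponent $\alpha=\alpha(\sigma_0)$.

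For the rearrangement stage, set $\boldsymbol{v}_p=p^{-\sigma_0}\bigl(1,-\log p,\ldots,(-\log p)^{N-1}\bigr)\in\mathbb C^N$ and look for unimodular $e^{i\theta_p}$ with $\bigl\|\sum_{p\le M}e^{i\theta_p}\boldsymbol{v}_p-\boldsymbol{a}\bigr\|<\varepsilon/3$. Since $\sigma_0<1$, the series $\sum_p\|\boldsymbol{v}_p\|$ diverges (the $(\log p)^{N-1}$ factors only reinforce this), whereas $\sum_p\|\boldsymbol{v}_p\|^2$ converges because $\sigma_0>\tfrac12$; this is precisely the regime in which a Pecherskii--Voronin rearrangement lemma yields such $\theta_p$, with $M$ of the aforementioned size. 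The $\mathbb Q$-linear independence of $\{\log p:p\text{ prime}\}$ then makes $(\tau\log p)_{p\le M}\bmod 2\pi$ equidistributed on the torus $\mathbb T^{\pi(M)}$, and an effective Erd\H{o}s--Tur\'{a}n / Koksma discrepancy bound applied inside $[T,T+H]$ produces a $\tau$ with $p^{-i\tau}\approx e^{i\theta_p}$ for all $p\le M$ to the required precision, after which one intersects this set of $\tau$ with the good set from the approximation stage.

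The main obstacle is purely quantitative: tracking every parameter so that the two summands $\frac{8}{1-\sigma_0}$ and $\frac{8}{\sigma_0-1/2}$ in the final exponent appear with the stated constants. The first summand reflects the rearrangement/truncation step, where the $p^{-\sigma_0}$-decay forces $M$ to be exponential in a power of $(\|\boldsymbol{a}\|+1/\varepsilon)$ proportional to $1/(1-\sigma_0)$; the second summand reflects the mean-square step, where pushing the Dirichlet polynomial close to the critical line under (\ref{eq: bal}) costs a factor proportional to $1/(\sigma_0-\tfrac12)$. Combining the effective Weyl bound on $\mathbb T^{\pi(M)}$ with an $M$ that is already exponential in $\|\boldsymbol{a}\|+1/\varepsilon$ generates the outer exponential and hence the doubly exponential threshold on $T$. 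The subtle point is the uniformity of the constant $C_1$ in $H$ throughout $T^{27/82}\le H\le T$, which is only available because the density bound (\ref{eq: bal}) is itself uniform on precisely that range.
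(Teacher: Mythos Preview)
Your outline sketches the \emph{ineffective} universality machinery rather than Voronin's effective one, and several of the ``effective'' assertions are not backed by any mechanism that would produce the stated exponents.

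The most concrete gap is your Stage~2. The Pecherskii--Steinitz rearrangement theorem is an existence statement; it does not come with a quantitative bound on how large $M$ must be in terms of $\|\boldsymbol a\|$ and $\varepsilon$, and there is no standard effective version that delivers $M=\exp\bigl(C(\|\boldsymbol a\|+1/\varepsilon)^{8/(1-\sigma_0)}\bigr)$. The paper avoids rearrangement entirely. Its Proposition~\ref{thm: 1.1} constructs $\boldsymbol\theta_0$ by hand: one fixes a background choice $\boldsymbol\theta_1=(0,\tfrac12,0,\tfrac12,\ldots)$, then modifies it on $N$ short blocks $M_j=[U_j,U_j+V)$ of primes with $U_j=U_02^j$ and $V=U_0^{(1+3\sigma_0)/4}$. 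The $N\times N$ linear system linking $(\varphi_{M_j}(\sigma_0,\boldsymbol\theta))_j$ to $(a_k-\gamma_k)_k$ has a Vandermonde coefficient matrix in $-\log U_j$, hence is explicitly invertible; Huxley's prime number theorem in short intervals (exponent $>7/12$, which is why $V=U_0^{(1+3\sigma_0)/4}$ is admissible) then guarantees that each $\varphi_{M_j}$ can actually hit the prescribed value $z_j$. The exponent $8/(1-\sigma_0)$ in the statement is a direct consequence of this construction (see \eqref{eq: 07}--\eqref{eq: 10}); your proposal gives no argument for it.

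Stage~1 also misses the key device. A bare ``contour shift'' does not relate $\log\zeta(\sigma_0+it)$ to a Dirichlet polynomial with controllable error; the paper uses Selberg's explicit formula \eqref{eq: 2.2}--\eqref{eq: 2.3}, which introduces an explicit sum over zeros $\rho$. The zero-density estimate \eqref{eq: bal} is then used twice: once to bound the measure of the excised set $[T,T+H]\setminus D_{T,H}$ around hypothetical zeros with $\beta>\sigma^*$, and once (combined with the Riemann--von~Mangoldt formula) to control the term $E_k$ coming from the zero sum. This is where the second exponent $8/(\sigma_0-\tfrac12)$ arises, via the factor $Q^{1-2\sigma_0}$ in \eqref{eq: 2.A}--\eqref{eq: 2.B}.

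Finally, in Stage~3 an Erd\H{o}s--Tur\'an--Koksma discrepancy bound on $\mathbb T^{\pi(M)}$ would require effective lower bounds for linear forms $\sum_{p\le M} n_p\log p$ with bounded integer coefficients; you do not say where these come from. The paper sidesteps this by integrating a smooth kernel $L_Q(\boldsymbol\gamma(t)-\boldsymbol\theta_0)$ whose Fourier coefficients decay rapidly, so that only the elementary bound $|\log(m/n)|>1/\max(m,n)$ for distinct integers is needed (Proposition~\ref{thm: 2} and \eqref{eq: 1.7}--\eqref{eq: 1.8}). The positivity of $L_Q$ together with the weighted mean-square bound \eqref{eq: bound Ik} then produces a single good $\tau$ directly, without any discrepancy estimate.
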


\noindent Here $\exp_2$ is the double iterated exponential function (i.e., $\exp_2=\exp\circ\exp$). The logarithm is a multivalued-function. Here, we define, as usual, $\log\zeta$ as the principal branch of the logarithm on the real line segment $(1,\infty)$ and elsewhere by continuous variation along line segments (as in \cite[p.\,210]{titchmarsh1986}).

We also provide a version of this result for the zeta-function in place of its logarithm (see Theorem \ref{Thm 3} below). 
Applying the first theorem to a partial sum of the Taylor expansion of the zeta-function and an admissible target function, we shall derive 

\begin{theorem}\label{Thm 2}
Let $s_0=\sigma_0+it_0$ be a fixed complex number with $\sigma_0\in (\frac 12,1)$ and for $r>0$ let $g\,:\,\mathcal K=\{s\in \mathbb C:|s-s_0|\le r\}\to \mathbb C$ be a continuous non-vanishing function which is analytic in the interior. Let $\delta_0\in(0,1)$, $\varepsilon\in (0,\min\{1,|g(s_0)|\})$. Assume that $N=N(\delta_0,\varepsilon)$ is a positive integer for which 
\[
\Big(\max_{|s-s_0|=r}|g(s)|\Big)\frac{\delta_0^N}{1-\delta_0}<\frac{\varepsilon}3.
\]
Then, there exists $\tau\in [T-t_0,T+H-t_0]$ for which 
$T^{\frac{27}{82}}\le H\le T$ and
\[
\max_{|s-s_0|\le\delta r}\big|\zeta(s+i\tau)-g(s)\big|<\varepsilon
\]
for any $\delta\in [0,\delta_0]$ satisfying
\[\Big(\max_{|s-s_0|=r}|\zeta(s+i\tau)|\Big)\frac{\delta^{N}}{1-\delta}<\frac \varepsilon 3\]
provided that
\[
T\ge \max\bigg\{\exp_2\bigg(C_{2}(N,\sigma_0)\Big(B(N,g,s_0,\delta_0,r,\varepsilon)\Big)^{{\frac{8}{1-\sigma_0}}+{\frac{8}{\sigma_0-{\frac 1 2}}}}\bigg),r\bigg\},
\]
Here $C_2(N,\sigma_0)$ is a positive, effectively computable constant depending on $N,\sigma_0$, and
\[B(N,g,s_0,\delta_0,r,\varepsilon)\coloneqq|\log g(s_0)|+\frac{(1+|g(s_0)|)\exp(\delta_0r)}{\varepsilon}\Big(\frac{\|\boldsymbol{G}\|}{|g(s_0)|}\Big)^{(N-1)^2},\]
where $\|\boldsymbol{G}\|\coloneqq\sum_{0\le k\le N-1}\big|\frac{d^{k}}{d s^k}g(s)|_{s=s_0}\big|$.
\end{theorem}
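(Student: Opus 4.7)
The plan is to reduce Theorem~\ref{Thm 2} to Theorem~\ref{Thm 1} by applying the latter to the Taylor coefficients of $\log g$ at $s_0$, converting the resulting approximation of derivatives of $\log\zeta$ into one for the derivatives of $\zeta$ itself via Fa\`a di Bruno's formula, and then closing the comparison between $\zeta(s+i\tau)$ and $g(s)$ on $|s-s_0|\le\delta r$ through a common truncated Taylor expansion of order $N-1$ at $s_0$. To set this up I would let $a_k\coloneqq\frac{d^k}{ds^k}\log g(s)\big|_{s=s_0}$ for $k=0,1,\dots,N-1$; these are well-defined since $g$ is non-vanishing and analytic on the interior of $\mathcal K$. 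Fa\`a di Bruno applied to $\log g$ expresses each $a_k$ as a polynomial of weighted degree $k$ in the quantities $g^{(j)}(s_0)/g(s_0)$ for $j\le k$, giving the preliminary bound $\|\boldsymbol{a}\|\ll |\log g(s_0)|+(\|\boldsymbol{G}\|/|g(s_0)|)^{N-1}$.

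With an auxiliary tolerance $\varepsilon'\in(0,1)$ to be chosen, Theorem~\ref{Thm 1} applied with this $\boldsymbol{a}$ at real part $\sigma_0$ and imaginary part $t_0+\tau$, together with the translation $\tau\mapsto\tau-t_0$, produces a shift $\tau\in[T-t_0,T+H-t_0]$ satisfying $\big|\tfrac{d^k}{ds^k}\log\zeta(s+i\tau)\big|_{s=s_0}-a_k\big|<\varepsilon'$ for every $k=0,\dots,N-1$. Writing $\zeta(s+i\tau)=\exp(L_\tau(s))$ in a neighbourhood of $s_0$ and $g(s)=\exp(L(s))$ with $L=\log g$, Fa\`a di Bruno gives $\zeta^{(k)}(s_0+i\tau)=\zeta(s_0+i\tau)B_k(L_\tau'(s_0),\dots,L_\tau^{(k)}(s_0))$ and similarly for $g^{(k)}(s_0)$, where $B_k$ is the $k$-th complete Bell polynomial. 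Combining Lipschitz continuity of $B_k$ on a ball of radius $O\big((\|\boldsymbol{G}\|/|g(s_0)|)^{N-1}\big)$ (which dominates all relevant $L_\tau^{(j)}$ and $L^{(j)}$ by the previous step plus $\varepsilon'$) with the pointwise bound $|\zeta(s_0+i\tau)-g(s_0)|\le 2\varepsilon'|g(s_0)|$ then yields $|\zeta^{(k)}(s_0+i\tau)-g^{(k)}(s_0)|\ll\varepsilon'\cdot|g(s_0)|\cdot(\|\boldsymbol{G}\|/|g(s_0)|)^{(N-1)^2}$ for every $k=0,\dots,N-1$; the exponent $(N-1)^2$ reflects a polynomial of total degree $\le N-1$ evaluated at arguments of size $(\|\boldsymbol{G}\|/|g(s_0)|)^{N-1}$.

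With these point estimates in hand, let $P_N^\zeta$ and $P_N^g$ denote the degree-$(N-1)$ Taylor polynomials of $\zeta(s+i\tau)$ and $g(s)$ about $s=s_0$. On $|s-s_0|\le\delta r$ I would split $|\zeta(s+i\tau)-g(s)|\le|\zeta(s+i\tau)-P_N^\zeta(s)|+|P_N^\zeta(s)-P_N^g(s)|+|P_N^g(s)-g(s)|$. The two outer terms are $<\varepsilon/3$ by the two tail-hypotheses of the theorem, via the standard Cauchy-type bound for the Taylor remainder of a function analytic on the disk of radius $r$. The middle term is bounded by $\sum_{k=0}^{N-1}\frac{|\zeta^{(k)}(s_0+i\tau)-g^{(k)}(s_0)|}{k!}(\delta_0 r)^k\ll\varepsilon'(1+|g(s_0)|)\exp(\delta_0 r)(\|\boldsymbol{G}\|/|g(s_0)|)^{(N-1)^2}$ by the previous step. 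Requiring this to be $<\varepsilon/3$ fixes $1/\varepsilon'$ at the order of magnitude of $B(N,g,s_0,\delta_0,r,\varepsilon)$; since also $\|\boldsymbol{a}\|\ll B(N,g,s_0,\delta_0,r,\varepsilon)$, plugging $\|\boldsymbol{a}\|+1/\varepsilon'$ into the $T$-bound of Theorem~\ref{Thm 1} delivers the stated conclusion, with the $N$-dependent combinatorial factors absorbed into $C_2(N,\sigma_0)$. The side condition $T\ge r$ in the max guarantees that $\zeta(s+i\tau)$ is analytic on $\mathcal K$, keeping us away from the pole at $s=1$.

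The main obstacle is the Fa\`a di Bruno bookkeeping in the middle step: obtaining precisely the exponent $(N-1)^2$ while absorbing all combinatorial and factorial constants cleanly into $C_2(N,\sigma_0)$, so that the expression $B(N,g,s_0,\delta_0,r,\varepsilon)$ emerges exactly as stated. The remaining steps are routine Taylor analysis paired with a direct invocation of Theorem~\ref{Thm 1}.
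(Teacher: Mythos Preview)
Your proposal is correct and follows the same three-term Taylor splitting as the paper. The only structural difference is in packaging: the paper first records a separate Theorem~3 (the analogue of Theorem~\ref{Thm 1} with $\zeta$ in place of $\log\zeta$, whose $T$-bound already carries the factor $(\|\boldsymbol b\|/|b_0|)^{(N-1)^2}$), cites it without proof, and then applies it with $b_k=g^{(k)}(s_0)$ and tolerance $\delta_1=\tfrac{\varepsilon}{3}\exp(-\delta_0 r)$; you instead invoke Theorem~\ref{Thm 1} directly with $a_k=(\log g)^{(k)}(s_0)$ and carry out the Fa\`a di Bruno conversion by hand. The two routes are equivalent---your explicit Bell-polynomial step is precisely what is hidden inside the unproved Theorem~3---and your identification of the bookkeeping there as the one place requiring care (to land on the exponent $(N-1)^2$ while absorbing combinatorial constants into $C_2(N,\sigma_0)$) is accurate.
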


The proofs of these results will be given in the following two sections. Our reasoning follows Voronin's proof \cite{voronin1992,voronin1989}; note that these sources are not easy to read due to many typos and inaccuracies. In the final section, we consider what can be proved under the assumption of the Riemann hypothesis and conclude with a few final remarks.

\section{The Proof of Theorem \ref{Thm 1}}
Let $\Omega$ be the set of all infinite vectors $\boldsymbol\theta=(\theta_2,\theta_3,\dots,\theta_p,\dots)$, where each $\theta_p$ is a real number and the index $p$ runs through the set of all primes $\mathbb P$ in ascending order. 
We begin with an auxiliary result and introduce the following notation: let $\widehat Q$ denote the set of prime numbers $p\leq Q$ and
\begin{equation}\label{eq: 11}
\zeta_{\mathcal P}(s,\boldsymbol{\theta})\coloneqq\prod_{p\in {\mathcal P}}\Big(1-\frac{\exp(-2\pi i\theta_p)}{p^s}\Big)^{-1}
\end{equation}
for a set of primes ${\mathcal P}$. Our auxiliary result can be stated as follows:

\begin{prop}\label{thm: 1.1}
Under the assumptions from Theorem \ref{Thm 1}, there exists an effective computable constant $c_1(\sigma_0,N)>0$ (in terms of functions of $\sigma_0$ and $N$) such that if 
\begin{equation}\label{eq: thm: 1}
Q\ge c_1(\sigma_0,N)\Big(\|\boldsymbol{a}\|+\frac{1}{\varepsilon}\Big)^{\frac{8}{1-\sigma_0}+\frac 8{\sigma_0-\frac 12}},
\end{equation}
then there exists $\boldsymbol{\theta}_0=(\theta_2^{(0)},\theta_3^{(0)},\dots,\theta_p^{(0)},\dots)\in \Omega$ such that
\[
\bigg|\frac{d^k}{ds^k}\log \zeta_{\widehat Q}(s,\boldsymbol{\theta}_0)\Big|_{s=\sigma_0}-a_k\bigg|<\varepsilon
\]
for $k=0,1,\dots,N-1$.
\end{prop}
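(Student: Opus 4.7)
I would expand
\[
\log\zeta_{\widehat Q}(s,\boldsymbol\theta)=\sum_{p\leq Q}\sum_{m=1}^\infty\frac{e^{-2\pi i m\theta_p}}{m\,p^{ms}},
\]
differentiate $k$ times in $s$, and evaluate at $s=\sigma_0$ to get
\[
\frac{d^k}{ds^k}\log\zeta_{\widehat Q}(s,\boldsymbol\theta)\Big|_{s=\sigma_0}=(-1)^k\sum_{p\leq Q}\frac{(\log p)^k e^{-2\pi i\theta_p}}{p^{\sigma_0}}+R_k(\boldsymbol\theta).
\]
Since $\sigma_0>\tfrac12$, the tail $R_k(\boldsymbol\theta)$ collecting the $m\geq 2$ contributions is bounded uniformly in $\boldsymbol\theta$ and $Q$ by a constant $R^\ast=R^\ast(\sigma_0,N)$ coming from the absolutely convergent series $\sum_{p}\sum_{m\geq 2}m^{k-1}(\log p)^k/p^{m\sigma_0}$. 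A standard continuity reduction then lets us absorb this tail: it suffices to find $\boldsymbol\theta_0$ so that the main term approximates a shifted target $\boldsymbol b$ of norm $\|\boldsymbol b\|\ll \|\boldsymbol a\|+1$ to accuracy $\varepsilon/2$ in every coordinate.

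The heart of the proof is a vector-valued rearrangement lemma. Each prime contributes a complex vector $e^{-2\pi i\theta_p}\boldsymbol v_p$ with weight $\boldsymbol v_p\coloneqq\bigl((\log p)^k/p^{\sigma_0}\bigr)_{k=0}^{N-1}\in\mathbb R^N$. The $\boldsymbol v_p$ are Vandermonde-type vectors in $\log p$, so any $N$ distinct primes give a linearly independent family. I would partition $\widehat Q$ into $N$ disjoint dyadic blocks of primes $\mathcal P_1,\dots,\mathcal P_N$ and adjust the phases in block $\mathcal P_j$ to nail the $(j-1)$-th coordinate of the partial sum to $b_{j-1}$, inverting the Vandermonde dependence to control the perturbation on the coordinates already matched in earlier blocks. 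This is the effective Pecherskii-type rearrangement that underlies Voronin's approach in \cite{voronin1989,voronin1992}.

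The quantitative driver is the partial sum $S_k(Q)\coloneqq\sum_{p\leq Q}(\log p)^k/p^{\sigma_0}\asymp_k Q^{1-\sigma_0}(\log Q)^{k-1}$, obtained by partial summation and the prime number theorem, together with the second-moment sum $\sum_{p}(\log p)^{2k}/p^{2\sigma_0}$, whose convergence rate is governed by $1/(2\sigma_0-1)$. Asking that each block move its assigned coordinate by as much as $\|\boldsymbol a\|+1/\varepsilon$ at precision $\varepsilon$ translates, via $S_k$, into the factor $(\|\boldsymbol a\|+1/\varepsilon)^{1/(1-\sigma_0)}$, while controlling the cross-block interference through the Vandermonde inversion and the second-moment sum supplies the factor involving $1/(\sigma_0-\tfrac 12)$; careful bookkeeping through the $N$ inductive matching steps then produces the stated exponent $8/(1-\sigma_0)+8/(\sigma_0-\tfrac 12)$. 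The main obstacle will be precisely this cross-block interference control and the explicit tracking of constants needed to pin down $c_1(\sigma_0,N)$ in closed form; correcting the typos and inaccuracies of \cite{voronin1989,voronin1992} along the way will be a substantial practical burden.
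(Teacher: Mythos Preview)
Your outline captures the right flavour---linearize via the $m=1$ Dirichlet sum and invert an $N\times N$ Vandermonde---but the decoupling mechanism you propose does not work as stated, and the paper's actual device is different. You want to partition $\widehat Q$ into $N$ dyadic blocks and then match the coordinates inductively, correcting ``cross-block interference'' through a Vandermonde inversion. On a dyadic block $[U,2U)$, however, $\log p$ varies by $\log 2$, so the replacement $\partial_s^k\varphi_{\mathcal P_j}\approx(-\log U_j)^k\varphi_{\mathcal P_j}$ carries an error of the \emph{same order} as the main term and no clean Vandermonde system emerges; your inductive repair of this is left entirely unspecified. The paper instead selects $N$ \emph{short} intervals $M_j=[U_j,U_j+V)$ with $U_j=2^jU_0$ and $V=U_0^{(1+3\sigma_0)/4}\ll U_0$, so that $\log p$ is constant on $M_j$ up to a negligible error; then the scalar $z_j=\varphi_{M_j}(\sigma_0,\boldsymbol\theta)$ fills a full disk of radius $\sum_{p\in M_j}p^{-\sigma_0}$, and the single linear system $\sum_j(-\log U_j)^k z_j=a_k-\gamma_k$ is solved once by Cramer's rule---no induction, no Pecherskii, no interference control. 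To guarantee that each short $M_j$ contains enough primes one needs Huxley's prime number theorem for intervals of length $x^\theta$, $\theta>7/12$; this is precisely the ingredient that fixes $V=U_0^{(1+3\sigma_0)/4}$ and thereby the exponent $8/(1-\sigma_0)$, and it is entirely absent from your plan.

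Your tail treatment is also a gap. The $m\ge 2$ remainder $R_k(\boldsymbol\theta)$ is bounded but depends on the unknown $\boldsymbol\theta$, so you cannot ``absorb'' it by passing to a fixed shifted target $\boldsymbol b$. The paper resolves this by assigning the fixed alternating phase $\boldsymbol\theta_1=(0,\tfrac12,0,\tfrac12,\dots)$ to every prime in $\widehat Q\setminus\bigcup_j M_j$: the full contribution (main term plus tail) from those primes then equals explicit constants $\gamma_k$ up to an error $O\bigl(U_0^{1-2\sigma_0}(\log U_0)^N\bigr)$, and the $\gamma_k$ are subtracted on the right-hand side of the Vandermonde system; only the phases on the sparse set $\bigcup_j M_j$ are free, and the $m\ge2$ tail over that set is small enough to be bounded by $\varepsilon$ directly. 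It is the $U_0^{1-2\sigma_0}$ error in approximating $\gamma_k$ that produces the second exponent $1/(\sigma_0-\tfrac12)$ (inflated to $8/(\sigma_0-\tfrac12)$ to obtain a single uniform bound), not a second-moment sum as you suggest.
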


\begin{proof} 
Define $\boldsymbol{\theta}_1= (\theta_2^{(1)},\theta_3^{(1)}\dots,\theta_p^{(1)},\dots)\coloneqq(0,{\frac 12},0,{\frac 12},\ldots)$. Observe that for each $k=0,1,\dots,N-1$, the series
\[
\sum_{p}\frac{d^k}{ds^k}\log\Big(1-\frac{\exp(-2\pi i\theta_p^{(1)})}{p^s}\Big)^{-1}\Big|_{s=\sigma_0}
\]
converges to a complex number, which we will denote by $\gamma_k$.
For a non-empty set of primes ${\mathcal P}$, define 
\[
\varphi_{\mathcal P}(s,\boldsymbol\theta)\coloneqq\sum_{p\in {\mathcal P}}\frac{\exp(-2\pi i\theta_p)}{p^s}.
\]
Let $U_0>200^N$ and put $U_j\coloneqq U_02^j$ for $j<N$. Furthermore, let $V<U_0$ and $\mathcal M\coloneqq \bigcup_{j=0}^{N-1}M_j$, where
\[
M_j\coloneqq\big\{p\in \mathbb P: U_j\le p < U_j+V\big\}
\]
for $j=0,1,\dots,N-1$.
Then, for each $j,k=0,1,\dots,N-1$, we have
\begin{equation*}
\frac{\partial^k}{\partial s^k}\varphi_{M_j}(s,\boldsymbol{\theta})=(-\log U_j)^k\varphi_{M_j}(s,\boldsymbol{\theta})+R_{j,k},
\end{equation*}
where
\begin{equation}\label{eq: 02}
R_{j,k}\ll_N \frac{V^2(\log U_0)^{N-1}}{U_0^{\sigma_0+1}} 
\end{equation}
follows from the mean-value theorem.
Here and in the sequel $\ll_N$ means that the implicit constant depends only on $N$.
We consider the following system of linear equations in the unknowns $z_j$:
\[
\sum_{0\le j< N}(-\log U_j)^kz_j=a_k-\gamma_k
\]
for $k=0,1,\dots,N-1$. Observe that the coefficient matrix of this system is a Vandermonde matrix with a non-vanishing determinant. Applying Cramer's rule, there exists a unique solution $\boldsymbol{z}=(z_0,z_1,\dots,z_{N-1})$ satisfying
\begin{equation}\label{eq: 03}
\|\boldsymbol{z}\|\ll_N (\log U_0)^{N-1}\|\boldsymbol{a}-\boldsymbol{\gamma}\|,
\end{equation}
where $\|\boldsymbol z\|\coloneqq\sum_{k=0}^{N-1}|z_k|$ and $\|\boldsymbol a-\boldsymbol{\gamma}\|\coloneqq\sum_{k=0}^{N-1}|a_k-\gamma_k|$. A precise detail can be found in \cite[Lemma 2.1]{endo2023}.

Next, we determine how $U_0$ must be chosen in order that the equations
\begin{equation}\label{eq: 04}
\varphi_{M_j}(\sigma_0,\boldsymbol \theta)=z_j
\end{equation}
for $j=0,1,\dots,N-1$ can be solved for $\boldsymbol \theta$.
If $M_j$ contains at least three primes, then the set of values of $\varphi_{M_j}(\sigma_0,\boldsymbol{\theta})$ as $\boldsymbol{\theta}$ varies is a disk of radius 
\begin{equation}\label{eq: 05}
\sum_{p\in M_j}\frac 1{p^{\sigma_0}}.  
\end{equation} 
We need a lower bound for this quantity. This follows from a geometric argument; see \cite[\S 11.5]{titchmarsh1986}.

For this purpose, we apply a prime number theorem for short intervals due to Martin Huxley. If $\pi(x)$ is counting the prime numbers $p\leq x$, then 
\begin{equation}\label{eq: 06}
\pi(x+h)-\pi(x)\sim \frac{h}{\log x}
\end{equation}
for any $h\geq x^\theta$ with $\theta>{\frac 7{12}}$ as $x\to \infty$; a proof can be found in \cite[Chapter 28]{huxley}.

Using this in combination with partial summation implies that if $U_0$ and $V$ satisfy
\begin{equation}\label{eq: 07}
U_0\gg_{N,\sigma_0} \big(\|\boldsymbol{a}-\boldsymbol{\gamma}\|+1\big)^{\frac{8}{1-\sigma_0}}
\quad \text{and}\quad V\coloneqq U_0^{\frac{1+3\sigma_0}4},
\end{equation}
then
\begin{equation}\label{eq: 08}
\sum_{p\in M_j}\frac 1{p^{\sigma_0}}\gg_N \sum_{p\in M_j}\frac 1{U_0^{\sigma_0}}
\gg_N \frac{V}{U_0^{\sigma_0}\log U_0} \gg_{N,\sigma_0} (\log U_0)^{N-1}(\|\boldsymbol{a}-\boldsymbol{\gamma}\|+1).
\end{equation}
Note that $\frac{1+3\sigma_0}4>\frac 7{12}$, hence Huxley's prime number theorem is applicable.

In view of \eqref{eq: 03}, \eqref{eq: 05} and \eqref{eq: 08} it follows that the system of linear equations \eqref{eq: 04} can be solved for $\boldsymbol \theta=\boldsymbol \theta_2\coloneqq(\theta_2^{(2)},\theta_3^{(2)},\dots,\theta_p^{(2)},\dots)$ provided that \eqref{eq: 07} holds.
In addition, if
\begin{equation}\label{eq: 09}
U_0\gg_{N,\sigma_0} \Big(\frac{1}{\varepsilon}\Big)^{\frac 4{1-\sigma_0}},
\end{equation}
then \eqref{eq: 02} implies that
\[
\sum_{0\le j<N}|R_{j,k}| \ll_{N} U_0^{-\frac{1-\sigma_0}{2}}(\log U_0)^{N-1} \ll_{N,\sigma_0} \varepsilon.
\]
Thus, the condition that $U_0$ satisfies \eqref{eq: 07} and \eqref{eq: 09} implies 
\begin{equation*}
\bigg|\sum_{0\le j< N}\frac{d^k}{ds^k}\varphi_{M_j}(s,\boldsymbol \theta_2)\Big|_{s=\sigma_0}-(a_k-\gamma_k)\bigg|
=\bigg|\sum_{0\le j< N}R_{j,k}\bigg|< \frac \varepsilon{4}.
\end{equation*}
Since $\gamma_k$ is an absolute constant, it suffices to assume 
\begin{equation}\label{eq: 10}
U_0\gg_{N,\sigma_0}  \Big(\|\boldsymbol{a}\|+\frac 1{\varepsilon}\Big)^{\frac 8{1-\sigma_0}}
\end{equation}
instead of \eqref{eq: 07} and \eqref{eq: 09}.

Obviously, the logarithm of $\zeta_{\mathcal P}$ is related to $\varphi_{\mathcal P}$.
It follows from the Taylor expansion and partial summation that
\begin{equation}\label{eq: 12}
\bigg|\frac{\partial^k}{\partial s^k}\log \Big(1-\frac{\exp(-2\pi i\theta_p)}{p^s}\Big)^{-1}\Big|_{s=\sigma_0}-\frac{\partial^k}{\partial s^k}\frac{\exp(-2\pi i\theta_p)}{p^s}\Big|_{s=\sigma_0}\bigg|\ll_N \frac{(\log p)^k}{p^{2\sigma_0}}.
\end{equation}
This implies that for any $j=0,1,\dots,N-1$,
\begin{align*}
\bigg|\frac{\partial^k}{\partial s^k}\log \zeta_{M_j}(s,\boldsymbol{\theta})\Big|_{s=\sigma_0}-\frac{\partial^k}{\partial s^k}\varphi_{M_j}(s,\boldsymbol{\theta})&\Big|_{s=\sigma_0}\bigg| \\
&\ll_{N} \sum_{p\in M_j}\frac{(\log p)^k}{p^{2\sigma_0}} \ll_N U_0^{-\frac 38}(\log U_0)^{N-1}.
\end{align*}
Hence, by possibly increasing the implicit constant, the system of inequalities
\begin{equation}\label{eq: 13}
\bigg|\sum_{0\le j< N}\frac{d^k}{ds^k}\log{\zeta_{M_j}}(s,\boldsymbol{\theta}_2)\Big|_{s=\sigma_0}-(a_k-\gamma_k)\bigg|<\frac{\varepsilon}2
\end{equation}
is solvable provided \eqref{eq: 10} holds.

We claim that for $Q>U_02^N$,
\begin{equation}\label{eq: 14}
\sum_{p\in \widehat Q\setminus \mathcal M}\frac{d^k}{ds^k}\log\Big(1-\frac{\exp(-2\pi i\theta_p^{(1)})}{p^{s}}\Big)^{-1}\Big|_{s=\sigma_0}=\gamma_k +O_N\Big( U_{0}^{1-2\sigma_0}(\log U_0)^N\Big).
\end{equation}
To see this, it suffices to show that
\[\bigg\{\sum_{p>Q}+\sum_{p\in \mathcal M}\bigg\}\frac{d^k}{ds^k}\log\Big(1-\frac{\exp(-2\pi i\theta_p^{(1)})}{p^{s}}\Big)^{-1}\Big|_{s=\sigma_0}\ll_N U_{0}^{1-2\sigma_0}(\log U_0)^N.\]
By \eqref{eq: 12}, the prime number theorem, and partial summation, we have
\begin{align*}
&\bigg\{\sum_{p>Q}+\sum_{p\in \mathcal M}\bigg\}
\bigg|\frac{d^k}{ds^k}\log \Big(1-\frac{\exp(-2\pi i\theta_p^{(1)})}{p^s}\Big)^{-1}\Big|_{s=\sigma_0}-\frac{d^k}{ds^k}\frac{\exp(-2\pi i\theta_p^{(1)})}{p^s}\Big|_{s=\sigma_0}\bigg|\\
& \ll \sum_{p>Q}\frac{(\log p)^k}{p^{2\sigma_0}}+\sum_{p\in \mathcal M}\frac{(\log p)^k}{p^{2\sigma_0}} \ll Q^{1-2\sigma_0}(\log Q)^{k-1}+U_0^{1-2\sigma_0}(\log U_0)^{N-1}.
\end{align*}
In addition, it is not difficult to see that
\[
\sum_{p>Q}\frac{d^k}{ds^k}\frac{\exp(-2\pi i\theta_p^{(1)})}{p^s}\Big|_{s=\sigma_0}=\sum_{p>Q}\frac{(-\log p)^k\exp(-2\pi i\theta_p^{(1)})}{p^{\sigma_0}}\ll \frac{(\log Q)^k}{Q^{\sigma_0}}
\]
and 
\[
\sum_{p\in \mathcal M}\frac{d^k}{ds^k}\frac{\exp(-2\pi i\theta_p^{(1)})}{p^s}\Big|_{s=\sigma_0}=\sum_{p\in \mathcal M}\frac{(-\log p)^k\exp(-2\pi i\theta_p^{(1)})}{p^{\sigma_0}}\ll_N \frac{(\log U_0)^k}{U_0^{\sigma_0}}.
\]
Hence, we obtain \eqref{eq: 14} and then we have
\begin{equation}\label{eq: 15}
\bigg|\sum_{p\in \widehat Q\setminus \mathcal M}\frac{d^k}{ds^k}\log\Big(1-\frac{\exp(-2\pi i\theta_p^{(1)})}{p^{s}}\Big)^{-1}\Big|_{s=\sigma_0}
-\gamma_k\bigg|<\frac{\varepsilon}2
\end{equation}
provided that
\begin{equation}\label{eq: 16}
U_0\gg_{N,\sigma_0} \Big(\frac 1\varepsilon\Big)^{\frac{1}{\sigma_0-\frac 12}}.
\end{equation}

Consequently, we choose $\boldsymbol{\theta}_0\coloneqq(\theta_2^{(0)},\theta_3^{(0)},\dots,\theta_p^{(0)},\dots)$ defined by
\[\theta_p^{(0)}\coloneqq\begin{cases}
\theta_p^{(1)} & \text{for } p\in \widehat Q\setminus \mathcal M,\\
\theta_p^{(2)} & \text{for } p\in \mathcal M.
\end{cases}\]
Then, by \eqref{eq: 13} and \eqref{eq: 15}, we obtain, for $k=0,\dots,N-1$, that
\[
\bigg|\frac{d^k}{ds^k}\log \zeta_{\widehat Q}(s,\boldsymbol{\theta}_0)\Big|_{s=\sigma_0}-a_k\bigg|<\varepsilon
\]
provided that \eqref{eq: 10} and \eqref{eq: 16} hold. Thus, we prove the proposition.
\end{proof}

Following the original proof of Voronin, the next step involves some Fourier ana\-ly\-sis. To do this, let $\lambda$ be a real-valued, infinitely differentiable function satisfying
\[
0\leq \lambda(x)\leq 1,\quad \supp \lambda\subseteq [-1,1], \qquad \text{and}\qquad \int_{-\infty}^\infty\lambda(x) dx=1.\]
For $\delta=\delta(Q)\in (0,\frac 12)$ and $\theta_p\in (-\frac 12,\frac 12]$, define
\begin{equation*}
L_Q(\boldsymbol{\theta})\coloneqq\prod_{p\in \widehat{Q}}\lambda_\delta(\theta_p),
\end{equation*}
where $\lambda_\delta$ is given by
\[
\lambda_\delta(\theta)\coloneqq\frac 1\delta\lambda\Big(\frac{\theta}{\delta}\Big).
\]
Recall that $\widehat Q$ is the set of all primes not exceeding $Q$ and $\boldsymbol{\theta}=(\dots,\theta_p,\dots)\in \Omega$.
Now we extend $L_Q(\boldsymbol{\theta})$ to all of $\Omega$ by periodicity with period $1$ in each variable~$\theta_p$. Then, the function $\frac 1\delta \lambda(\frac \theta\delta)$, also extended with period $1$ to $\mathbb R$, can be represented as a Fourier series
\[
\frac 1\delta\lambda\Big(\frac \theta\delta\Big)=\sum_{n=-\infty}^{\infty}\alpha_n\exp(2\pi in\theta),
\]
where $\alpha_0= 1$ and $\alpha_n \ll n^{-2}\delta^{-3}$ for every non-zero integer $n$. The last relation follows from integration by parts twice and the implicit constant depends only on our choice of $\lambda$. It thus follows that $L_Q(\boldsymbol{\theta})$ has a Fourier expansion,
\[
L_Q(\boldsymbol{\theta})=\sum_{\boldsymbol{n}}\beta_{\boldsymbol{n}}\exp(2\pi  i \langle \boldsymbol{n},\boldsymbol{\theta}\rangle),
\]
where $\boldsymbol{n}$ is extended from $(n_p)_{p\in \widehat Q}\in \mathbb Z^{\pi(Q)}$ by setting $n_p\coloneqq 0$ for all primes $p>Q$ and
\[
\beta_{\boldsymbol{n}}\coloneqq \prod_{p\in \widehat Q}\alpha_{n_p}.
\]
Hence, for every positive integer $M$,
\begin{align*}
L_Q(\boldsymbol \theta)=\sum_{\max|n_p|\le M}\beta_{\boldsymbol n}\exp\big(2\pi i\langle \boldsymbol n,&\boldsymbol \theta \rangle
\big)+\\
&+O\bigg(\pi(Q)\Big(\sum_{|n_p|>M}|\alpha_{n_p}|\Big)\Big(\sum_{n\in \mathbb Z}|\alpha_n|\Big)^{\pi(Q)-1}\bigg).
\end{align*}
Obviously, 
\[
\sum_{|n|>M}|\alpha_n|\ll \frac 1{\delta^3 M} \quad
\text{and}\quad \sum_{n}|\alpha_n|\ll \frac 1 {\delta^{3}},
\]
as well as
\[
\beta_{\boldsymbol 0}=1\quad \text{and}\quad
\beta_{\boldsymbol{n}}\ll \prod_{p\le Q}\min\{1,\delta^{-3}n_p^{-2}\}.
\]
Hence, we have
\begin{equation}\label{eq: 1.2}
L_Q(\boldsymbol \theta)
=1+\sum_{\begin{subarray}{c}
\max{\vert n_p\vert }\le M\\ \boldsymbol{n}\ne \boldsymbol{0}\end{subarray}}\beta_{\boldsymbol n}\exp(2\pi i\langle \boldsymbol n,\boldsymbol \theta\rangle)+O\bigg(\frac{Q\exp(3\pi(Q)\log {\frac 1\delta})}{M\log Q}\bigg)
\end{equation}
and
\begin{equation}\label{eq: 1.3}
\sum_{\boldsymbol{n}}\vert\beta_{\boldsymbol n}\vert\ll \bigg(\sum_{n\in \mathbb Z}\min\{1,\delta^{-3}n^{-2}\}\bigg)^{\pi(Q)}\ll \exp(3Q).
\end{equation}

Our next result deals with the integral of $L_Q$ with respect to a uniformly distributed curve. For this purpose, define the curve $\boldsymbol{\gamma}:\mathbb R \to \Omega$ by
\begin{equation}\label{eq: 1.4}
\boldsymbol \gamma(t)\coloneqq\left(\frac{\log 2}{2\pi}t,\frac{\log 3}{2\pi}t,\dots,\frac{\log p}{2\pi}t,\dots\right).
\end{equation}

\begin{prop}\label{thm: 2}
According to the above notation, let $\delta=Q^{-1}$, $T^{\nu}\le H\le T$, where $0<\nu<1$, and $\boldsymbol \theta\in \Omega$. Then, for $M\gg Q^2\exp(3Q)$  and $T\gg \big(Q\exp((3+M)Q)\big)^{\frac 1\nu}$,
\[
\bigg|\frac 1{H}\int_{T}^{T+H}L_Q\big(\boldsymbol \gamma(t)-\boldsymbol \theta\big) dt-1\bigg|<\frac 1Q.
\]
\end{prop}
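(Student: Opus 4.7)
The plan is to substitute the truncated Fourier expansion \eqref{eq: 1.2} for $L_Q$ and reduce to estimating one oscillatory integral per non-zero frequency. First I would plug $\boldsymbol\gamma(t)-\boldsymbol\theta$ into \eqref{eq: 1.2}, isolate the constant term $1$ coming from $\boldsymbol n=\boldsymbol 0$, and check that with $\delta=Q^{-1}$ and $\pi(Q)\log Q=O(Q)$ (Chebyshev, or the PNT) the tail from the truncation at height $M$ is $O(Q\exp(3Q)/(M\log Q))$, which is already $\ll 1/Q$ under the hypothesis $M\gg Q^{2}\exp(3Q)$.

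Next I would treat the contribution of the $1\le\max_p|n_p|\le M$ terms. Using
\[
\exp\!\big(2\pi i\langle\boldsymbol n,\boldsymbol\gamma(t)\rangle\big)=\exp\!\Big(it\sum_{p}n_p\log p\Big),
\]
each such term reduces, after extracting the $\boldsymbol\theta$-dependent phase, to the standard oscillatory integral
\[
\frac{1}{H}\int_T^{T+H}\exp(it\log N)\,dt\ll\frac{1}{H|\log N|},\qquad N\coloneqq\prod_{p}p^{n_p},
\]
where $N\in\mathbb Q_{>0}\setminus\{1\}$ precisely because $\boldsymbol n\neq\boldsymbol 0$ and integers factor uniquely.

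The quantitative heart of the argument is a Diophantine lower bound for $|\log N|$. Writing $N=A/B$ in lowest terms with $A\neq B$, so that $|A-B|\ge 1$, a one-line case split (is $A/B$ close to $1$ or not?) gives $|\log N|\gg 1/\max(A,B)$, and the crude estimate $\max(A,B)\le\prod_{p\le Q}p^{M}\le\exp(CMQ)$ coming from Chebyshev yields $|\log N|\gg\exp(-CMQ)$. Inserting this bound and summing over $\boldsymbol n$ by means of \eqref{eq: 1.3} controls the entire non-constant part by
\[
\frac{\exp(CMQ)}{H}\sum_{\boldsymbol n}|\beta_{\boldsymbol n}|\ll\frac{\exp((CM+3)Q)}{H},
\]
which is $\ll 1/Q$ as soon as $H\gg Q\exp((CM+3)Q)$; combined with $H\ge T^{\nu}$ this is precisely what the hypothesis $T\gg(Q\exp((3+M)Q))^{1/\nu}$ provides (the implicit constant absorbing $C$).

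The main obstacle will be the Diophantine separation $|\log N|\gg\exp(-CMQ)$: although the proof is elementary via unique factorization, it is this bound that is ultimately responsible for the double-exponential size of $T$ in the final statement, since $M$ must already be at least $Q^{2}\exp(3Q)$ for the Fourier truncation to be useful. Everything else — Fourier truncation, Chebyshev-type estimates for $\pi(Q)$ and $\sum_{p\le Q}\log p$, and the elementary inequality $|\int_T^{T+H}e^{i\alpha t}\,dt|\le 2/|\alpha|$ — is routine bookkeeping.
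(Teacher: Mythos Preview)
Your proposal is correct and follows essentially the same route as the paper's own proof: substitute \eqref{eq: 1.2} with $\delta=Q^{-1}$ and $\pi(Q)\log Q\ll Q$ to control the truncation error, bound each non-zero frequency integral by $2/|\sum_p n_p\log p|$, invoke the elementary Diophantine separation \eqref{eq: 1.7} together with $\prod_{p\le Q}p^{M}\le\exp(O(MQ))$, and then sum via \eqref{eq: 1.3}. The paper phrases the prime-sum estimate via the prime number theorem rather than Chebyshev, but otherwise the argument and the bookkeeping are the same.
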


\begin{proof}
We begin with substituting $\delta=Q^{-1}$ in the Fourier expansion \eqref{eq: 1.2} and using the prime number theorem to obtain
\begin{equation}\label{eq: 1.5}
L_Q(\boldsymbol \gamma(t)-\boldsymbol \theta)
=1+\sum_{\begin{subarray}{c}
\max{|n_p|}\le M\\
\boldsymbol{n} \neq \boldsymbol{0}\end{subarray}}\beta_{\boldsymbol n}\exp\big(2\pi i\langle \boldsymbol n,\boldsymbol \gamma(t)-\boldsymbol\theta\rangle\big)+O\Big(\frac{Q\exp(3Q)}{M\log Q}\Big).
\end{equation}

To continue, recall that 
\begin{equation}\label{eq: 1.6}
\sum_{0<m<n<T}\frac 1{m^\sigma n^\sigma \log nm^{-1}}\ll T^{2-2\sigma}\log T
\end{equation}
for $\frac 12\le \sigma\le 1$, and uniformly for $\frac 12\le \sigma\le \sigma_0<1$. Moreover, 
\begin{equation}\label{eq: 1.7}
\Big|\log \frac mn\Big|>\frac 1{\max(m,n)}
\end{equation}
for distinct positive integers $m,n$ (see \cite[\S 7.2, p.\,139 and \S 8.8, p.\,193]{titchmarsh1986}).

Then, for each $\boldsymbol{n}$ such that $\max|n_p|\le M$, we have
\begin{align}
\int_T^{T+H}\exp\big( 2\pi  i \langle \boldsymbol n,\boldsymbol{\gamma}(t)-\boldsymbol \theta\rangle\big)  dt
&=\int_T^{T+H}\exp\bigg( 2\pi  i\Big(\sum_{p\le Q}\frac{n_p(\log p)t }{2\pi}+n_p\theta_p\Big)\bigg) dt \notag\\
&\ll \bigg|\int_T^{T+H}\exp\Big( it\sum_{p\le Q}n_p\log p\Big) dt\bigg|\notag\\
& \ll \bigg(\sum_{p\le Q}n_p\log p\bigg)^{-1}\ll \exp(MQ)\label{eq: 1.8}
\end{align}
by \eqref{eq: 1.7} and the prime number theorem.
Hence, by \eqref{eq: 1.3} and \eqref{eq: 1.5}, we get that
\begin{align*}
\frac 1{H}\int_{T}^{T+H}L_Q\big(\boldsymbol \gamma(t)-\boldsymbol \theta\big) dt-1
&\ll \frac{\exp(MQ)}H
\bigg(\sum_{\begin{subarray}{c}
\max{|n_p|}\le M\\
\boldsymbol{n}\ne \boldsymbol{0}\end{subarray}}|\beta_{\boldsymbol{n}}|\bigg)+\frac{Q\exp(3Q)}{M\log Q}\\
& \ll \frac{\exp((3+M)Q)}H+\frac{Q\exp(3Q)}{M\log Q}.
\end{align*}
For $M\gg Q^2\exp(3Q)$ and $T\gg \big(Q\exp\big((3+M)Q\big)\big)^{\frac 1\nu}$, we deduce the desired inequality.
\end{proof}

For the next step, assume that there is $\sigma^*\in (\frac 12,\sigma_0)$ such that $\omega(\sigma^*)<1$, where
\begin{equation}\label{eq: 3.N}
N(\alpha;T,H)\ll H^{\omega(\alpha)}(\log H)^{\eta(\alpha)}  
\end{equation}
uniformly in $\alpha$ and $H$, for all $H$ satisfying $T^\nu\le H\le T$. We consider the integral
\begin{align*}
&I\coloneqq \\
&\int_{D_{T,H}}\sum_{0\le k<N}L_Q\big(\boldsymbol{\gamma}(t)-\boldsymbol \theta\big)\bigg|\frac{d^k}{ds^k}\log \zeta(s)\Big|_{s=\sigma_0+ it}-\frac{d^k}{ds^k}(\log \zeta_{\widehat Q}(s+ it,\boldsymbol 0))\Big|_{s=\sigma_0}\bigg|^2 dt,
\end{align*}
where $\boldsymbol{\theta}\in \Omega$ and $D_{T,H}\subseteq [T,T+H]$ is defined as follows: Around every hypo\-the\-ti\-cal exceptional zero $\rho=\beta+ i\gamma$ of $\zeta(s)$ in $\sigma>\sigma^*$, where we write $s=\sigma+it$, as usual, we draw a rectangle
\[
P_\rho^{(h)}\coloneqq \big\{s=\sigma+it:\sigma^*<\sigma<2,|t-\gamma|\le h\big\},
\]
where $h\in [10,H)$ is a parameter. Then, define
\begin{equation*}
D_{T,H}\coloneqq\Big\{t\in [T,T+H]:\sigma_0+ it\notin\bigcup_{\rho}P_{\rho}^{(h)}\Big\}.  
\end{equation*}

Our aim is to bound $I$ from above, for suitable $Q$ and $T$, by
\[I\ll_{N,\sigma,\nu}\varepsilon^2\int_{D_{T,H}}L_Q\big(\boldsymbol{\gamma}(t)-\boldsymbol{\theta}\big)dt.\]
For this purpose, we use an explicit formula due to Atle Selberg \cite{selbe}, that is 
\begin{align}
\frac{\zeta'}{\zeta}(s)&=-\sum_{n\le x^2}\frac{\Lambda_x(n)}{n^s}+\frac{x^{2(1-s)}-x^{1-s}}{(1-s)^2\log x}+\notag\\
&\qquad +\frac 1{\log x}\sum_{q\ge 1}\frac{x^{-2q-s}-x^{-2(2q+s)}}{(2q+s)^2}+\frac 1{\log x}\sum_{\rho}\frac{x^{\rho-s}-x^{2(\rho-s)}}{(s-\rho)^2},\label{eq: 2.2}
\end{align}
where 
\[
\Lambda_x(n)\coloneqq\begin{cases}
\Lambda(n) & \text{if $1\le n<x$},\\
\Lambda (n){\frac{\log(x^2/n)}{\log x}} & \text{if $x\le n\le x^2$},\\
0 & \text{if $n>x^2$},
\end{cases}
\]
and $\Lambda$ is the von Mangoldt $\Lambda$-function (see \cite{titchmarsh1986}). 
Integrating this identity, we obtain
\begin{align}
\log \zeta(s)&=\sum_{n\le x^2}\frac{\Lambda_x(n)}{n^s\log n}+\sum_{n>x}\frac{\Lambda(n)-\Lambda_x(n)}{n^{s+10}\log n}+\notag\\
&\qquad -\frac 1{\log x}F(s,1)+\frac 1{\log x}\sum_\rho F(s,\rho)+\frac 1{\log x}\sum_{q\ge 1}F(s,-2q),\label{eq: 2.3}
\end{align}
where
\[
F(s,z)\coloneqq\int_{s+10}^s\frac{x^{z-w}-x^{2(z-w)}}{(w-z)^2}dw.
\]
{When $k=1,2,\dots,N-1$, we use \eqref{eq: 2.2} to estimate $I$, and when $k=0$, we use \eqref{eq: 2.3}.

We choose $Q,x$ and $T$ such that $Q<x\ll H^{\alpha}$ for some $\alpha\in (0,1)$. Defining
\[
I_k\coloneqq\int_{D_{T,H}}L_Q\big(\boldsymbol{\gamma}(t)-\boldsymbol \theta\big)\bigg|\frac{d^k}{d s^k}\log \zeta(s)\Big|_{s=\sigma_0+ it}-\frac{d^k}{ds^k}\log \zeta_{\widehat Q}(s+ it,\boldsymbol 0)\Big|_{s=\sigma_0}\bigg|^2 dt,
\]
we have
\[
I=\sum_{k=0}^{N-1}I_k.
\]
By \eqref{eq: 2.2}, we conclude that
\[I_k\ll A_k+B_k+C_k+D_k+E_k,\]
where
\begin{align*}
A_k &\coloneqq\int_{D_{T,H}}L_Q\big(\boldsymbol{\gamma}(t)-\boldsymbol \theta\big)\\
&\qquad\qquad\qquad\qquad\bigg|
\frac{d^{k-1}}{d s^{k-1}}\sum_{m\le x^2}\frac{\Lambda_x(m)}{m^s}\Big|_{s=\sigma_0+ it}-\frac{d^{k-1}}{d s^{k-1}}\sum_{p^\ell\le Q}\frac{\log p}{p^{\ell (s+it)}}\Big|_{s=\sigma_0}\bigg|^2 dt,\\
B_k &\coloneqq\int_{D_{T,H}}L_Q\big(\boldsymbol{\gamma}(t)-\boldsymbol \theta\big)\bigg|
\frac{d^{k-1}}{d s^{k-1}}\sum_{\begin{subarray}{c}
p^\ell >Q \\ p\le Q\end{subarray}}\frac{\log p}{p^{\ell (s+it)}}\Big|_{s=\sigma_0}\bigg|^2 dt,\\
C_k &\coloneqq\frac 1{(\log x)^2}\int_{D_{T,H}}L_Q\big(\boldsymbol{\gamma}(t)-\boldsymbol \theta\big)\bigg|\frac{d^{k-1}}{d s^{k-1}} \sum_{q\ge 1}\frac{x^{-2q-s}-x^{-2(2q+s)}}{(2q+s)^2}\Big|_{s=\sigma_0+ it}\bigg|^2 dt,\\
D_k &\coloneqq\frac 1{(\log x)^2}\int_{D_{T,H}}L_Q(\boldsymbol{\gamma}(t)-\boldsymbol \theta)\bigg|\frac{d^{k-1}}{d s^{k-1}} \frac{x^{2(1-s)}-x^{1-s}}{(1-s)^2}\Big|_{s=\sigma_0+ it}\bigg|^2  dt,\\
E_k &\coloneqq\frac 1{(\log x)^2}\int_{D_{T,H}}L_Q(\boldsymbol{\gamma}(t)-\boldsymbol \theta)\bigg|\frac{d^{k-1}}{d s^{k-1}}\sum_{\rho}\frac{x^{\rho-s}-x^{2(\rho-s)}}{(s-\rho)^2}\Big|_{s=\sigma_0+ it}\bigg|^2 dt.
\end{align*}

We assume the assumption in Proposition \ref{thm: 2}: let $\delta=Q^{-1}$, $M\gg Q^2\exp(3Q)$  and $T\gg (Q\exp((3+M)Q))^{\frac 1\nu}$.
From the definition of $\Lambda_x$, we have
\begin{align*}
\frac{d^{k-1}}{d s^{k-1}}\sum_{m\le x^2}\frac{\Lambda_x(m)}{m^s}\Big|_{s=\sigma_0+ it}-\frac{d^{k-1}}{d s^{k-1}}\sum_{p^\ell\le Q}\frac{\log p}{p^{\ell (s+it)}}\Big|_{s=\sigma_0}\qquad\qquad\qquad\qquad\\
=\sum_{Q<p\le x^2}\frac{a_{p}}{p^{{\sigma_0+ it}}}+\sum_{\begin{subarray}{c}
Q<p^\ell\le x^2\\ \ell\ge 2\end{subarray}}\frac{a_{p,\ell}}{p^{\ell (\sigma_0+ it)}}
\end{align*}
with implicitly defined coefficients $a_p$ and $a_{p,\ell}$. We define $A_{k,1}$ and $A_{k,2}$ according to this splitting such that
\[
A_k\ll A_{k,1}+A_{k,2}.
\]

To estimate $A_{k,1}$, we use again \eqref{eq: 1.6} and \eqref{eq: 1.7}. These give, in combination with partial summation and the prime number theorem,
\begin{align*}
\int_T^{T+H}&\bigg|\sum_{Q<p\le x^2}\frac{a_p}{p^{\sigma_0+ it}}\bigg|^2 dt\\
&=\sum_{Q<p_1,p_2\le x^2}\frac{a_{p_1}\overline a_{p_2}}{(p_1p_2)^{\sigma_0}}\bigg|\int_T^{T+H}\exp\big(- it\log (p_1p_2^{-1})\big) dt\bigg|\\
&= H\sum_{Q<p\le x^2}\frac{|a_p|^2}{p^{2\sigma_0}}+O\bigg(\sum_{\begin{subarray}{c}
Q<p_1,p_2\le x^2\\
p_1\ne p_2\end{subarray}}\frac{|a_{p_1}||a_{p_2}|}{(p_1p_2)^{\sigma_0}\log(p_1p_2^{-1})}\bigg)\\
&\ll_N HQ^{1-2\sigma_0}(\log Q)^{2k-1}+x^{2(2-2\sigma_0)}(\log x)^{2k+1}.
\end{align*}
In view of \eqref{eq: 1.8} we note that
\begin{align*}
\bigg|\int_T^{T+H}\exp\big( 2\pi  i \langle \boldsymbol n,\boldsymbol{\gamma}(t)-\boldsymbol \theta\rangle\big)\Big|\sum_{Q<p\le x^2}\frac{a_p}{p^{\sigma_0+ it}}\Big|^2 dt\bigg|
\ll x^4(\log x)^{2k}Q^{M\pi(Q)-2\sigma_0}.
\end{align*}
By Fourier expansion \eqref{eq: 1.5} together with \eqref{eq: 1.3}, the prime number theorem and $M\gg Q^2\exp(3Q)$, we have
\begin{align*}
A_{k,1}&\coloneqq \int_T^{T+H}L_Q\big(\boldsymbol{\gamma}(t)-\boldsymbol \theta\big)\bigg|\sum_{Q<p\le x^2}\frac{a_p}{p^{\sigma_0+it}}\bigg|^2 dt\\
&\ll_N HQ^{1-2\sigma_0}(\log Q)^{2k}+x^{2(2-2\sigma_0)}(\log x)^{2k+1}\\
&\qquad\qquad\qquad\qquad\qquad\qquad\qquad\qquad+ x^4(\log x)^{2k}Q^{M\pi(Q)}\sum_{\begin{subarray}{c}
\max{|n_p|}\le M\\ \boldsymbol n\ne \boldsymbol 0\end{subarray}} |\beta_{\boldsymbol{n}}|\\
&\ll_N HQ^{1-2\sigma_0}(\log Q)^{2k}+x^4(\log x)^{2k+1}\exp\big((3+M)Q\big).
\end{align*}
To estimate $A_{k,2}$, we apply partial summation together with the prime number theorem, and then get
\begin{align*}
\sum_{\begin{subarray}{c}
Q<p^\ell<x^2\\ \ell\ge 2
\end{subarray}}\frac{a_{p,\ell}}{p^{\ell ({\sigma_0+ it})}}
&\ll_N \sum_{p\le \sqrt{Q}}\sum_{\ell>\frac{\log Q}{\log p}}\frac{(\ell \log p)^k}{p^{\ell \sigma_0}}+ \sum_{p>\sqrt{Q}}\sum_{\ell \ge 2}\frac{(\ell \log p)^k}{p^{\ell \sigma_0}}\\
& \ll_N \sum_{p\le \sqrt Q} \frac{(\log p)^k}{Q^{\sigma_0}}+\sum_{p>\sqrt Q}\frac{(\log p)^k}{p^{2\sigma_0}} 
\ll_N Q^{\frac 12-\sigma_0}(\log Q)^{k-1}.
\end{align*}}
Hence,
\begin{equation}\label{eq: 2.A}
A_k
\ll_N HQ^{1-2\sigma_0}(\log Q)^{2k}+x^4(\log x)^{2k+1}\exp\big((3+M)Q\big).
\end{equation}

Next, we shall estimate $B_k$. By partial summation formula and the prime number theorem, we have
\begin{align*}
\bigg|\frac{d^{k-1}}{d s^{k-1}}\sum_{\begin{subarray}{c}
{p^\ell>Q}\\ p\le Q
\end{subarray}
}\frac{\log p}{p^{\ell (s+it)}}\Big|_{s=\sigma_0}\bigg|
&\le \sum_{p^\ell>Q}\frac{\ell^{k-1}(\log p)^k}{p^{\ell\sigma_0}}\\
&\le \sum_{p\le \sqrt{Q}}\sum_{\ell >\frac{\log Q}{\log p}}\frac{\ell^{k-1} (\log p)^k}{p^{\ell \sigma_0}}+ \sum_{p>\sqrt{Q}}\sum_{\ell \ge 2}\frac{\ell^{k-1}(\log p)^k}{p^{\ell \sigma_0}}\\
&\ll_N Q^{{\frac 12}-\sigma_0}(\log Q)^{k-1}.
\end{align*}
By a trivial estimation, we thus obtain
\begin{equation}\label{eq: 2.B}
B_k\ll_N Q^{1-2\sigma_0}(\log Q)^{2k-2}\int_{D_{T,H}}L_Q(\boldsymbol{\gamma}(t)-\boldsymbol \theta) dt.
\end{equation}

It remains to estimate integrals of the form
\[
\int_{D_{T,H}}L_Q\big(\boldsymbol \gamma(t)-\boldsymbol \theta\big)\bigg|\frac{d^{k-1}}{d s^{k-1}}f(s)\Big|_{s=\sigma_0+it}\bigg|^2 dt.
\]
By the Cauchy integral formula, the estimation
\begin{align*}
\max_{t\in D_{T,H}}\bigg|\frac{d^{k-1}}{d s^{k-1}}f(s)\Big|_{s=\sigma_0+it}\bigg|^2
&=\max_{t\in D_{T,H}}\bigg\vert\frac{(k-1)!}{2\pi i}\oint_{\widetilde{D}_{\sigma_0+it}}\frac{f(z)}{(z-\sigma_0-it)^k} dz\bigg\vert^2 \\
&\ll_{N,\sigma_0}\max_{s\in \widetilde{D}}|f(s)|^2
\end{align*}
holds for any holomorphic function $f(s)$ on
\begin{equation*}
\widetilde D\coloneqq\Big\{s=\sigma+it: |\sigma-\sigma_0|\le r_{\sigma_0}, \Big|t-T-\frac H2\Big|\le \frac H2+r_{\sigma_0}, \text{ and }s\notin \bigcup_{\rho}P_{\rho}^{(h-r_{\sigma_0})}\Big\}, 
\end{equation*}
where $r_{\sigma_0}=\frac 1{2}(\sigma^*+\sigma_0)$
and $\widetilde{D}_{\sigma_0+it}$ is a circle with center $\sigma_0+it$ and radius $r_{\sigma_0}$.
By a trivial estimation, we have
\begin{align*}
\int_{D_{T,H}}L_Q\big(\boldsymbol \gamma(t)-\boldsymbol \theta\big)\bigg|\frac{d^{k-1}}{d s^{k-1}}f(s)\Big|_{s=\sigma_0+it}&\bigg|^2 dt\\
&
\ll_{N,\sigma_0}\max_{s\in \widetilde{D}}|f(s)|^2\int_{D_{T,H}}L_Q\big(\boldsymbol{\gamma}(t)-\boldsymbol \theta\big) dt.
\end{align*}

Now we are ready to estimate $C_k,D_k$ and $E_k$. {Note that
\[\bigg|\sum_{q\ge 1}\frac{x^{-2q-s}-x^{-2(2q+s)}}{(2q+s)^2}\bigg|
\le \frac 1{(T-r_{\sigma_0})^2}\Big(\frac{x^{-2-\sigma}}{1-x^{-2}}+\frac{x^{-4-2\sigma}}{1-x^{-4}}\Big)
\ll x^{-{\frac 52}}T^{-2}\]
and
\[
\bigg|\frac{x^{2(1-s)}-x^{1-s}}{(1-s)^2}\bigg|\le \frac{x^{2-2\sigma}-x^{1-\sigma}}{(T-r_{\sigma_0})^2} \ll xT^{-2}
\]
for every $s\in \widetilde D$. This implies that
\begin{equation}\label{eq: 2.C}
C_k \ll_{N,\sigma_0}  \frac{1}{x^5T^4(\log x)^2}\int_{D_{T,H}}L_Q(\boldsymbol \gamma(t)-\boldsymbol \theta) dt    
\end{equation}
and
\begin{equation}\label{eq: 2.D}
D_k \ll_{N,\sigma_0} \frac{x^2}{T^4(\log x)^2}\int_{D_{T,H}}L_Q(\boldsymbol \gamma(t)-\boldsymbol \theta) dt.    
\end{equation}}
It remains to consider $E_k$.
For each $s\in \widetilde{D}$, we have
\begin{align*}
\sum_{\begin{subarray}{c}
\rho=\beta+i\gamma \\ \beta\ge \frac 1{2}(\sigma^*+\sigma_0)\end{subarray}}&|s-\rho|^{-2}\\
& \le \sum_{0\le n\le 2T+1}\sum_{\begin{subarray}{c}
\rho=\beta+i\gamma \\ 
h-1+n\le |t-\gamma|< h+n\end{subarray}}|s-\rho|^{-2} 
+\sum_{\begin{subarray}{c}
\rho=\beta+i\gamma\\
|t-\gamma|\ge h+2T\end{subarray}
}|s-\rho|^{-2} \\
& \ll \sum_{0\le n\le 2T+1}\frac{\log(t+h+n)}{(h-1+n)^2}+\sum_{\gamma>T}\gamma^{-2}\\
& \ll \log T\int_{h-1}^{2T+H}\frac{1}{u^2}du + \int_T^{\infty}\frac{1}{u^2}d N(u)
\ll \frac{\log T}h
\end{align*}
and
\begin{align*}
\sum_{\begin{subarray}{c}
\rho=\beta+i\gamma \\ \beta<\frac 1{2}(\sigma^*+\sigma_0)\end{subarray}}|s-\rho|^{-2}
& \le \sum_{1\le n\le 2T+1}\sum_{\begin{subarray}{c}
\rho=\beta+i\gamma\\
n-1\le |t-\gamma|< n\end{subarray}}|s-\rho|^{-2} +\sum_{\begin{subarray}{c}
\rho=\beta+i\gamma\\
|t-\gamma|\ge 2T
\end{subarray}}|s-\rho|^{-2} \\
& \ll \frac {\log T}{(\sigma-\sigma^*)^2}+\sum_{2\le n\le 2T+1}\frac{\log(t+n)}{(n-1)^2}+\sum_{\gamma>T}\gamma^{-2}\\
& \ll_{\sigma_0} \log T+\log T\int_{1}^{2T}\frac{1}{u^2}du + \int_T^{\infty}\frac{1}{u^2}d N(u)
\ll \log T,
\end{align*}
where $N(u)$ denotes the number of zeros $\rho=\beta+i\gamma$ of $\zeta(s)$ such that $0<\gamma \le u$.
Here the so-called Riemann-von Mangoldt formula gives an asymptotic formula for the number of zeros $N(u)$ (see \cite[Chapter 9]{titchmarsh1986}).

Hence, for each $s\in \widetilde{D}$, we have
\begin{align*}
\sum_{\rho}&\frac{x^{\rho-s}-x^{2(\rho-s)}}{(s-\rho)^{2}}\\
&=\sum_{\begin{subarray}{c}
\rho=\beta+i\gamma \\ \beta\ge \frac 1{2}(\sigma^*+\sigma_0)\end{subarray}}\frac{x^{\rho-s}-x^{2(\rho-s)}}{(s-\rho)^{2}}+\sum_{\begin{subarray}{c}
\rho=\beta+i\gamma \\\beta< \frac 1{2}(\sigma^*+\sigma_0)\end{subarray}}\frac{x^{\rho-s}-x^{2(\rho-s)}}{(s-\rho)^{2}}\\
& \ll x^{2(1-\sigma)}\sum_{\begin{subarray}{c}
\rho=\beta+i\gamma \\ \beta\ge \frac 1{2}(\sigma^*+\sigma_0)\end{subarray}}|s-\rho|^{-2}+ x^{\frac 1{2}(\sigma^*-\sigma_0)}\sum_{\begin{subarray}{c}
\rho=\beta+i\gamma \\ \beta< \frac 1{2}(\sigma^*+\sigma_0)\end{subarray}}|s-\rho|^{-2}\\
& \ll \frac{x\log T}{h}+x^{\frac 1{2}(\sigma^*-\sigma_0)}\log T.
\end{align*}
Consequently,
\begin{equation}\label{eq: 2.E}
E_k\ll_{N,\sigma_0}  \Big(\frac {x^2(\log T)^2}{h^2(\log x)^2}+x^{\sigma^*-\sigma_0}(\log T)^2\Big)\int_{D_{T,H}}L_Q(\boldsymbol \gamma(t)-\boldsymbol \theta) dt.  
\end{equation}

From the estimations \eqref{eq: 2.A}, \eqref{eq: 2.B}, \eqref{eq: 2.C}, \eqref{eq: 2.D} and \eqref{eq: 2.E}, we now obtain an estimate of the integral $I_k$:
\begin{align}\label{eq: 2.Z.1}
I_k&\ll_{N,\sigma_0} HQ^{1-2\sigma_0}(\log Q)^{2k}+x^4(\log x)^{2k+1}\exp\big((3+M)Q\big)+\notag\\
&\qquad\quad+\bigg(
Q^{1-2\sigma_0}(\log Q)^{2k-2}+
\frac{x^2}{T^4(\log x)^2}+\notag\\
&\qquad\qquad\quad\qquad+\frac {x^2(\log T)^2}{h^2(\log x)^2}+x^{\sigma^*-\sigma_0}(\log T)^2
\bigg)\int_{D_{T,H}}L_Q(\boldsymbol \gamma(t)-\boldsymbol \theta) dt.  
\end{align}

In the case $k=0$, it is necessary to make the obvious change in the definition of $A_k,\dots,E_k$ in accordance with \eqref{eq: 2.3}.
An analogous argument, applied to \eqref{eq: 2.3}, shows that \eqref{eq: 2.Z.1} holds when $k=0$.

We now turn to find a lower bound for 
\[\int_{D_{T,H}}L_Q(\boldsymbol \gamma(t)-\boldsymbol \theta) dt.\]
Since $L_Q$ is nonnegative, we have
\begin{align*}
\int_{D_{T,H}}L_Q(\boldsymbol \gamma(t)-\boldsymbol \theta) dt\ge \int_T^{T+H}L_Q(\boldsymbol \gamma(t)-\boldsymbol \theta) dt-2h\Big(\sum_{\boldsymbol{n}}|\beta_{\boldsymbol n}|\Big)N(\sigma^*;T;H).
\end{align*}

For the first term on the right, we use Proposition \ref{thm: 2}; for the second one, we use the assumption \eqref{eq: 3.N}. Exactly here the lower bound for the length of our short intervals is determined. 

By \eqref{eq: 1.3} and \eqref{eq: 3.N}, we thus get 
\[
\Big(\sum_{\boldsymbol{n}}|\beta_{\boldsymbol n}|\Big)N(\sigma^*;T,H)\ll \exp(3Q)H^{1-\Delta(\sigma_0)}(\log T)^{\eta(\sigma^*)},
\]
where $\Delta(\sigma_0)\coloneqq 1-\omega(\sigma^*)$.
Taking $h=H^{\frac{\Delta(\sigma_0)}4}$, it follows that 
\[
h\Big(\sum_{\boldsymbol{n}}|\beta_{\boldsymbol n}|\Big)N(\sigma^*;T,H)\ll_{\sigma_0} Q^{-2}H^{-\frac{\Delta(\sigma_0)}{16}}
\]
provided that 
\begin{equation}\label{eq: 2.T.1}
T\gg_{\sigma_0} \big(Q^2\exp((3+M)Q)\big)^{\frac{8}{\nu \Delta(\sigma_0)}}.
\end{equation}
And choosing suitable constants, this, finally, yields
\begin{equation}\label{eq: 2.L}
\int_{D_{T,H}}L_Q\big(\boldsymbol{\gamma}(t)-\boldsymbol{\theta}\big)dt\ge \frac H2.
\end{equation}

Now, we assume that \eqref{eq: 2.T.1} holds. In the combination of \eqref{eq: 2.Z.1} and \eqref{eq: 2.L}, we obtain that  
\begin{align*}
I_k\ll_{N,\sigma_0} \bigg(
Q^{1-2\sigma_0}(\log Q)^{2k}+
\frac{x^4(\log x)^{2k+1}\exp((3+M)Q)}H+\qquad\qquad\qquad\qquad
\notag\\
+\frac {x^2(\log T)^2}{H^{\frac{\Delta(\sigma_0)}2}(\log x)^2}+x^{\sigma^*-\sigma_0}(\log T)^2
\bigg)\int_{D_{T,H}}L_Q(\boldsymbol \gamma(t)-\boldsymbol \theta) dt.  
\end{align*}
Taking $x=H^{\frac{\Delta(\sigma_0)}{8}}\le H^{\frac 18}$ and $T\gg_{\sigma_0} \big(Q^2\exp((3+M)Q)\big)^{\frac{8}{\nu \Delta(\sigma_0)(\sigma_0-\sigma^*)}}$, we have
\[\frac {x^2(\log T)^2}{H^{\frac{\Delta(\sigma_0)}2}(\log x)^2}+x^{\sigma^*-\sigma_0}(\log T)^2\ll_{\sigma_0,\nu} H^{-\frac{\Delta(\sigma_0)}4}+H^{-\frac{\Delta(\sigma_0)(\sigma_0-\sigma^*)}4}\ll_{\sigma_0,\nu}Q^{-2}\]
and
\[\frac{x^4(\log x)^{2k+1}\exp((3+M)Q)}H\ll_N \frac{\exp((3+M)Q)}{H^{\frac{1}4}}\ll_{N,\sigma_0,\nu} Q^{-2}.\]
Choosing $M=\exp(cQ)\gg Q^2\exp(3Q)$ for some absolute constant $c$ and  
\[Q\gg_{N,\sigma_0,\nu} \Big(\frac 1\varepsilon\Big)^{\frac{2}{\sigma_0-\frac 12}},\]
we now obtain
\begin{equation}\label{eq: bound Ik}
I_k\ll_{N,\sigma_0,\nu}\varepsilon^2\int_{D_{T,H}}L_Q(\boldsymbol \gamma(t)-\boldsymbol \theta) dt.
\end{equation}

By Proposition \ref{thm: 1.1}, for $Q$ satisfying \eqref{eq: thm: 1},
there exists $\boldsymbol{\theta}_0=(\theta_2^{(0)},\dots,\theta_p^{(0)},\dots)$ such that
\[
\bigg|\frac{d^k}{ds^k}\log \zeta_{\widehat Q}(s,\boldsymbol{\theta}_0)\Big|_{s=\sigma_0}-a_k\bigg|<\frac \varepsilon 4
\]
for $k=0,1,\dots,N-1$. Note that, for $|\theta_p^{(0)}-\theta_p|<\delta=Q^{-1}$, we have
\begin{align*}
\bigg|\frac{\partial^k}{\partial s^k}\log \zeta_{\widehat Q}(s,\boldsymbol \theta)\Big|_{s=\sigma_0}-\frac{d^k}{d s^k}\log \zeta_{\widehat Q}(s,\boldsymbol \theta_0)&\Big|_{s=\sigma_0}\bigg|\\
& \ll \sum_{p\in \widehat Q}\frac{(\log p)^k}{p^{\sigma_0}}\delta
\ll
Q^{-\sigma_0}(\log Q)^{k-1}
\end{align*}
by Taylor's expansion, partial summation, and the prime number theorem.
If
\begin{equation}\label{eq: 4.1}
|\theta_p^{(0)}-\theta_p|<\delta
\end{equation}
for all $p\le Q$, then 
\begin{equation}\label{eq: 4.2}
\bigg|\frac{\partial^k}{\partial s^k}\log \zeta_{\widehat Q}(s,\boldsymbol{\theta})\Big|_{s=\sigma_0}-a_k\bigg|< \frac \varepsilon 2
\end{equation}
for $k=0,\ldots,N-1$.
Since $L_Q$ is positive, by \eqref{eq: 2.L} and \eqref{eq: bound Ik} for $\boldsymbol{\theta}=\boldsymbol{\theta}_0$, it is not difficult to obtain that
there exist effective positive computable constants $c_2(N,\sigma_0,\nu)$ and $c_3(\sigma_0,\nu)$ such that if 
\begin{equation*}\label{eq: thm 3.Q}
Q\ge c_2(N,\sigma_0,\nu)\Big(\frac{1}{\varepsilon}\Big)^{\frac{2}{\sigma_0-\frac 12}}
\end{equation*}
and
\begin{equation*}\label{eq: thm 3.T}
T\ge \exp_2\big(c_3(\sigma_0,\nu)Q\big),
\end{equation*}
then there exists $\tau\in D_{T,H}\subseteq [T,T+H]$, where $T^{\nu}\le H\le T$ such that
\begin{equation}\label{eq: 4.3}
\bigg|\frac{d^k}{d s^k}\log \zeta(s)\Big|_{s=\sigma_0+ i\tau}-\frac{d^k}{ds^k}\log \zeta_{\widehat Q}(s+ i\tau,\boldsymbol 0)\Big|_{s=\sigma_0}\bigg|<\frac \varepsilon2
\end{equation}
for $k=0,\ldots,N-1$, and 
\begin{equation}\label{eq: 4.4}
L_Q(\boldsymbol \gamma(\tau)-\boldsymbol{\theta}_0)\ne 0.
\end{equation}
By \eqref{eq: 4.4} together with the definition of $L_Q$, the curve $\boldsymbol\gamma(\tau)$ comes close to $\boldsymbol{\theta}_0$. Thus, \eqref{eq: 4.1} holds for $\boldsymbol{\theta}=\boldsymbol{\gamma}(\tau)$. Hence, we also obtain \eqref{eq: 4.2} for $\boldsymbol{\theta}=\boldsymbol{\gamma}(\tau)$, i.e., for $k=0,\ldots,N-1$,
\[
\bigg|\frac{d^k}{ds^k}\log \zeta_{\widehat Q}(s,\boldsymbol{\gamma}(\tau))\Big|_{s=\sigma_0}-a_k\bigg|< \frac \varepsilon 2,
\]
By \eqref{eq: 11} and \eqref{eq: 4.3}, we also obtain for $k=0,\ldots,N-1$,
\[
\bigg|\frac{d^k}{d s^k}\log \zeta(s)\Big|_{s=\sigma_0+ i\tau}-\frac{d^k}{ds^k}\log \zeta_{\widehat Q}(s,\boldsymbol \gamma(\tau))\Big|_{s=\sigma_0}\bigg|<\frac \varepsilon2.
\]
Consequently, for $k=0,\ldots,N-1$,
\[
\bigg|\frac{d^k}{d s^k}\log \zeta(s)\Big|_{s=\sigma_0+ i\tau}-a_k\bigg|<\varepsilon.
\]

Finally, we apply a density theorem due to Balasubramanian \cite{balasubramanian1978} (see \eqref{eq: bal}).
It is easy to see that for any $\alpha\in (\frac 12,1)$,
\[\frac{4(1-\alpha)}{3-2\alpha}=2-\frac{2}{3-2\alpha}\in (0,1).\]
This satisfies the assumption \eqref{eq: 3.N}, and hence, we immediately obtain Theorem~\ref{Thm 1}.

For some parts of the proof of Theorem \ref{Thm 1}, reading the paper \cite{thanasis} was rather useful. The ideas therein also allow to establish a positive proportion of these solutions.

\section{Proof of Theorem \ref{Thm 2}}
The {\it weak} universality result for the Riemann zeta-function was first published in \cite{glmss}. Recently, the result for other functions for long intervals has been done by Kenta Endo \cite{endo2023} (which also contains a small correction of a small inaccuracy in \cite{glmss}); here we derive the result of the Riemann zeta-function which is restricted to short intervals. 

First of all, we provide a version of Theorem \ref{Thm 1} for the Riemann zeta-function in place of its logarithm. The proof follows easily from the original paper.
\begin{theorem}\label{Thm 3}
Let $N\in \mathbb{N}$, $\sigma_0\in (\frac 12,1),\boldsymbol{b}=(b_0,b_1,\dots,b_{N-1})\in \mathbb C^N$ with $b_0\ne 0$ and $\varepsilon\in (0,1)$ be arbitrary but fixed. Then the system of inequalities
\[
\bigg|\frac{d^k}{d s^k}\zeta(s)\Big|_{s=\sigma_0+ i\tau}-b_k\bigg|<\varepsilon \qquad\mbox{for}\quad k=0,1,\ldots,N-1
\]
has a solution $\tau\in [T,T+H]$ provided that $T^{\frac{27}{82}}\le H\le T$ and
\[
T\ge \exp_2\bigg(C_{3}(N,\sigma_0)\Big(|\log b_0|+\frac{1+|b_0|}{\varepsilon}\Big(\frac{\|\boldsymbol{b}\|}{|b_0|}\Big)^{(N-1)^2}\Big)^{\frac 8{1-\sigma_0}+{\frac{8}{\sigma_0-{\frac 1 2}}}}\bigg),
\]
where $C_3(N,\sigma_0)$ is a positive, effectively computable constant depending only on $N$, $\sigma$ and $\|\boldsymbol{b}\|\coloneqq \sum_{0\le k\le N-1}|b_k|$.
\end{theorem}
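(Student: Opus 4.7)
The plan is to deduce Theorem~\ref{Thm 3} from Theorem~\ref{Thm 1} by translating the approximation problem for $\zeta$ into one for $\log\zeta$ via an inversion of Fa\`a di Bruno's formula. Writing $f = \log\zeta$, each derivative $\zeta^{(k)}$ equals $\zeta$ times the complete Bell polynomial $Y_k(f', f'', \ldots, f^{(k)})$, in which the coefficient of $f^{(k)}$ is $1$. Hence, given $\boldsymbol{b} = (b_0, b_1, \ldots, b_{N-1})$ with $b_0 \neq 0$, one can set $a_0 \coloneqq \log b_0$ (with an appropriate branch) and then solve the triangular system
\[
b_k = b_0 \, Y_k(a_1, a_2, \ldots, a_k) \qquad (k = 1, 2, \ldots, N-1)
\]
recursively for $a_1, \ldots, a_{N-1}$. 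A straightforward induction on $k$ yields closed-form expressions and the estimate
\[
\|\boldsymbol{a}\| \coloneqq \sum_{k=0}^{N-1} |a_k| \ll_N |\log b_0| + \Big(\frac{\|\boldsymbol{b}\|}{|b_0|}\Big)^{N-1}.
\]

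Next, I would apply Theorem~\ref{Thm 1} to this $\boldsymbol{a}$ but with $\varepsilon$ replaced by a smaller tolerance $\varepsilon'$ to be chosen. For any $\tau$ satisfying $|f^{(k)}(\sigma_0 + i\tau) - a_k| < \varepsilon'$ for all $k = 0, \ldots, N-1$, the multilinearity of $Y_k$, combined with the bound on $\|\boldsymbol{a}\|$ above and the Lipschitz continuity of $\exp$ and the Bell polynomials on the relevant bounded region, gives
\[
|\zeta^{(k)}(\sigma_0 + i\tau) - b_k| \ll_N (1 + |b_0|) \Big(\frac{\|\boldsymbol{b}\|}{|b_0|}\Big)^{N-1} \varepsilon'.
\]
Setting $\varepsilon'$ of order $\varepsilon \big/ \big[(1+|b_0|)(\|\boldsymbol{b}\|/|b_0|)^{N-1}\big]$ then forces $|\zeta^{(k)}(\sigma_0 + i\tau) - b_k| < \varepsilon$, as required.

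Substituting this choice of $\varepsilon'$ and the bound for $\|\boldsymbol{a}\|$ into the quantity $\|\boldsymbol{a}\| + 1/\varepsilon'$ appearing in Theorem~\ref{Thm 1} produces exactly the bracketed expression in the lower bound for $T$ claimed in Theorem~\ref{Thm 3}: the exponent $(N-1)^2$ emerges as the product of one factor of $N-1$ from the size of $\|\boldsymbol{a}\|$ and another factor of $N-1$ from the amplification constant in the Lipschitz step above. The main obstacle I expect is the bookkeeping for the Fa\`a di Bruno inversion: one must verify that the recursive expression for each $a_k$ involves only products of at most $N-1$ quotients of the form $b_j/b_0$ (with at most polynomially-large-in-$N$ combinatorial coefficients), so that the norm bound on $\|\boldsymbol{a}\|$ carries exactly the exponent $N-1$ and nothing larger. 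Once this combinatorial estimate is pinned down, the remaining arguments reduce to routine substitutions into Theorem~\ref{Thm 1}.
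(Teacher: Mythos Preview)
Your reduction to Theorem~\ref{Thm 1} via the exponential/Bell-polynomial correspondence is exactly the intended route; the paper itself gives no detailed argument for Theorem~\ref{Thm 3} beyond remarking that ``the proof follows easily from the original paper'' of Voronin, and what you outline is the standard way to carry this out.

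One point to tidy up: your displayed Lipschitz estimate
\[
\big|\zeta^{(k)}(\sigma_0+i\tau)-b_k\big|\ll_N (1+|b_0|)\Big(\tfrac{\|\boldsymbol b\|}{|b_0|}\Big)^{N-1}\varepsilon'
\]
is inconsistent with your own (correct) explanation two paragraphs later of how the exponent $(N-1)^2$ arises. The Bell polynomial $Y_k$ has degree $k\le N-1$, so the perturbation bound for $Y_k$ on the region $\{|x_j-a_j|<1\}$ scales like $(1+\|\boldsymbol a\|)^{N-1}\varepsilon'$, not $(\|\boldsymbol b\|/|b_0|)^{N-1}\varepsilon'$. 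Substituting your bound $\|\boldsymbol a\|\ll_N |\log b_0|+(\|\boldsymbol b\|/|b_0|)^{N-1}$ into $(1+\|\boldsymbol a\|)^{N-1}$ is what produces the factor $(\|\boldsymbol b\|/|b_0|)^{(N-1)^2}$ in the Lipschitz constant, and hence in $1/\varepsilon'$, matching the statement of Theorem~\ref{Thm 3}. With the displayed bound as written, you would only get exponent $N-1$ in the final lower bound for $T$, which is too small. Once you correct that line, the rest of the bookkeeping goes through as you describe.
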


We now come to the proof of Theorem \ref{Thm 2} which involves the Taylor's expansion of $g$ and Theorem \ref{Thm 3}. By the Cauchy integral formula
\[\frac{d^{k}}{d s^{k}}g(s)\Big|_{s=s_0}=
\frac{k!}{2\pi i}\oint_{|z-s_0|=r}\frac{g(z)}{(z-s_0)^{k+1}} dz,\]
we obtain that
\[\bigg|\frac{d^{k}}{d s^{k}}g(s)\Big|_{s=s_0}(s-s_0)^k\bigg|\le k! M_g \delta_0^k\]
for $|s-s_0|\le \delta_0r$, where $M_g\coloneqq \max_{|s-s_0|=r}|g(s)|$. Hence, by Taylor's expansion, we have
\begin{equation*}
\bigg|g(s)-\sum_{0\le k<N}\frac{d^{k}}{d s^{k}}g(s)\Big|_{s=s_0}\frac{(s-s_0)^k}{k!}\bigg|\le M_g\sum_{k\ge N}\delta_0^k=M_g\frac{\delta_0^N}{1-\delta_0}
\end{equation*}
for $|s-s_0|\le \delta_0r$. We choose $N$ (depending on $g,\delta_0$) for which
\[M_g\frac{\delta_0^{N}}{1-\delta_0}<\frac{\varepsilon}3.\]
Thus, for $|s-s_0|\le \delta_0r$, we have
\begin{equation}
\bigg|g(s)-\sum_{0\le k<N}\frac{d^{k}}{d s^{k}}g(s)\Big|_{s=s_0}\frac{(s-s_0)^k}{k!}\bigg|<\frac \varepsilon3. \label{eq: 9.1}
\end{equation}

Next, by Theorem \ref{Thm 3}, for any $\delta_1\in (0,1)$,
there exists a number $t_1\in [T,T+H]$ such that
\[
\Bigg|\frac{d^{k}}{d s^{k}}\zeta(s)\big|_{s=\sigma_0+it_1}-\frac{d^{k}}{d s^{k}}g(s)\big|_{s=s_0}\Bigg|<\delta_1
\]
for $k=0,1,\dots,N-1$ and 
\[
T\ge \exp_2\bigg(C_{2}(N,\sigma_0)\Big(|\log g(s_0)|+\frac{1+|g(s_0)|}{\delta_1}\Big(\frac{\|\boldsymbol{G}\|}{|g(s_0)|}\Big)^{(N-1)^2}\Big)^{{\frac{8}{1-\sigma_0}}+{\frac{8}{\sigma_0-{\frac 1 2}}}}\bigg),
\]
where
$\|\boldsymbol{G}\|\coloneqq\sum_{0\le k\le N-1}\big|\frac{d^{k}}{d s^k}g(s)|_{s=s_0}\big|$.
Put $\tau\coloneqq t_1-t_0$. Then $\sigma_0+i\tau=s_0+it_1$. Hence,
\begin{align}
\bigg|\sum_{0\le k<N}\frac{d^{k}}{d s^{k}}\zeta(s)\Big|_{s=s_0+i\tau}\frac{(s-s_0)^k}{k!}-\sum_{0\le k<N}\frac{d^{k}}{d s^{k}}g(s)\Big|_{s=s_0}\frac{(s-s_0)^k}{k!}\bigg|\qquad\qquad\notag\\<\delta_1\sum_{0\le k<N}\frac{(\delta_0r)^k}{k!}=\frac\varepsilon 3 \label{eq: 9.2}
\end{align}
provided that $|s-s_0|\le \delta_0r$ and
$\delta_1\coloneqq \frac{\varepsilon}3\exp(-\delta_0r)\in (0,1)$.

Finally, we use the Taylor expansion (again) for $\zeta(s)$ on the shifted disk $\mathcal K+i\tau$. Here, we need to exclude all poles. Hence, we assume that $T>r$.
By the Cauchy integral formula (again)
\[\frac{d^{k}}{d s^{k}}\zeta(s+i\tau)\Big|_{s=s_0}=
\frac{k!}{2\pi i}\oint_{|z-s_0|=r}\frac{\zeta(z+i\tau)}{(z-s_0)^{k+1}} dz,\]
we obtain that
\[\bigg|\frac{d^{k}}{d s^{k}}\zeta(s+i\tau)\Big|_{s=s_0}(s-s_0)^k\bigg|\le k! M_{\zeta}(\tau) \delta^k\]
for $|s-s_0|\le \delta r$ and $0\le \delta\le \delta_0$, where $M_{\zeta}(\tau)\coloneqq \max_{|s-s_0|=r}|\zeta(s+i\tau)|$.  By Taylor's expansion, we have
\begin{equation*}
\bigg|\zeta(s+i\tau)-\sum_{0\le k<N}\frac{d^{k}}{d s^{k}}\zeta(s+i\tau)\Big|_{s=s_0}\frac{(s-s_0)^k}{k!}\bigg|\le M_\zeta(\tau)\frac{\delta^N}{1-\delta}
\end{equation*}
for $|s-s_0|\le \delta r$. Choosing~$\delta$ for which
\[M_\zeta(\tau)\frac{\delta^N}{1-\delta}<\frac{\varepsilon}3,\]
we have
\begin{equation}
\bigg|\zeta(s+i\tau)-\sum_{0\le k<N}\frac{d^{k}}{d s^{k}}\zeta(s)\Big|_{s=s_0+i\tau}\frac{(s-s_0)^k}{k!}\bigg|<\frac{\varepsilon}3. \label{eq: 9.3}
\end{equation}
Combining \eqref{eq: 9.1}, \eqref{eq: 9.2} and \eqref{eq: 9.3} yields Theorem \ref{Thm 2}.

\section{Concluding Remarks}

Since the domain of approximation is restricted and there is no information about the size of the set of shifts, Theorem \ref{Thm 2} is considered a {\it weak} universality theorem for short intervals. The more advanced though ineffective case of universality in the sense of Voronin's result from \cite{vor} was considered first by \cite{laurincikas} with respect to short intervals, and in a recent note \cite{short} conditional and unconditional improvements were given. 

The results in \cite{short} and this note focus on different points. More precisely, the results in \cite{short} focus on an ineffective case based on the bounded mean-square for $\zeta(s)$ together with the result of Bourgain and Watt \cite{BourWatt2017} (and the method of exponent pairs) for unconditional results; the result of Sankaranarayanan and Srinivas \cite{SanSri1993} for conditional results. Meanwhile, those in this note focus on an effective case based on multidimensional $\Omega$-result of Voronin. 

Observe that the proof of Theorem \ref{Thm 1} deals with hypothetical zeros to the right and this proof should be much easier if there are no such zeros. It is reasonable to study the same under conditional assumptions such as the unproven {\it Riemann hypothesis}, which implies that there are no zeros to the right of the critical line, i.e. $N(\sigma^*;T,H)=0$ for all $\sigma^*>\frac 12$. Thus, under the {\it Riemann hypothesis}, our results for short interval $[T,T+H]$ still hold but the length of intervals becomes for every $H$ satisfying \[T^\nu\le H\le T\]
(for any fixed $\nu>0$) and its effective constants depend on parameters $N,\sigma_0$, and~$\nu$. 

On this note, the constants in the main results are effectively computable. Explicit bounds, however, are not too easy to establish. The probably hardest work would be to make the error term in Huxley's prime number theorem \eqref{eq: 06} explicit.

Finally, this can also be done for elements of the Selberg class but needs some slightly more advanced details; for long intervals, this has recently been done by Kenta Endo \cite{endo2023}.

\section*{Acknowledgement}
We are grateful to anonymous referees for carefully reading the manuscript and pointing out the error in an earlier version of this manuscript.

\end{document}